\author{Thomas Morrill}
\theoremstyle{plain}
\newtheorem{theorem}{Theorem}[section]
\newtheorem{corollary}[theorem]{Corollary}
\theoremstyle{definition}
\theoremstyle{remark}
\newcommand{\nkshrink}[2]{{#1}_1 < {#1}_2 < \ldots < {#1}_{#2}}
\newcommand{\nklist}[2]{{#1}_1, {#1}_2, \ldots, {#1}_{#2}}
\newcommand{\QQ}{\mathbb Q}
\newcommand{\ZZ}{\mathbb Z}
\begin{document}



\title{Difference Tones in ``Non-Pythagorean'' Scales Based on Logarithms}


\begin{abstract}
  In order to explore tonality outside of the ``Pythagorean'' paradigm of integer ratios, Robert Schneider introduced a musical scale based on the logarithm function.
  We seek to refine Schneider's scale so that the difference tones generated by different degrees of the scale are themselves octave equivalents of notes in the scale.
  In doing so, we prove that a scale which contains all its difference tones in this way must consist solely of integer ratios.
  With this in mind, we present some methods for producing logarithmic scales which contain many, but not all, of the difference tones they generate.
\end{abstract}

\maketitle





{
In 1754, the Italian violinist Giuseppe Tartini described a musical phenomenon which he called \emph{terzi suoni}, or third sounds \cite{discovery, Tartini}.
He found that when two notes were played simultaneously, a third note could be perceived, whose frequency was the difference of the frequencies of the two played notes.
When listening to two tones of frequencies  $f_1 < f_2$, more tones may be perceived in addition to Tartini's \emph{terzi suoni}, whose frequencies are other linear combinations of the frequencies being played \cite{perception}.
Hence, the frequencies $m f_1 + n f_2$, are called the \emph{combination tones} of frequencies $f_1$ and $f_2$, where $n, m \in \ZZ$.

Perception of combination tones in humans has been qualitatively studied \cite{perception}, and combination tones have been empirically measured in string instruments \cite{violins}.
Combination tones are thought to be physically produced by nonlinear resonance in the recieving auditory system, such as the human eardrum \cite{Schwitzgebel, violins}.

Empirical evidence shows that the combination tones $f_2 - f_1$, $2f_1 - f_2$, and $3f_1 - 2f_2$ are the most commonly perceived, and the amplitude of combination tones in general varies depending on the frequencies $f_1$ and $f_2$ and their intensities \cite{perception}.
Being the most commonly perceived combination tone, we will focus exclusively on the \emph{first order difference tone} $f_2 - f_1$.

Since Tartini's discovery, combination tones have been used to develop novel musical scales.
A particularly deep example is Wilson's use of sum-diagonals of the Meru Prastala, perhaps more familiar to the reader as Pascal's triangle, of which Wilson documented 192 recurrent sequences \cite{Burt}.
Another example is Bohlen's use of the combination tone $f_1 + f_2$ in his so-called $833$ cents scale \cite{Bohlen, Smethurst}.

In $2012$, Schneider \cite{Schneider} introduced an infinite scale based over the frequency $f$ in order to explore tonality which does not seek to approximate integer ratios, whose frequencies are given by
\begin{align} \label{log-series}
	f\ln(3), f\ln(4), f\ln(5), \ldots.
\end{align}

Here and throughout the paper, we use $\ln(x)$ to denote the natural logarithm of $x$, {  and $\log_b(x) = \ln(x)/\ln(b)$ to denote the base $b$ logarithm}.
As a consequence of our methodology, we refer to \eqref{log-series} as the \emph{logarithmic series}.
As opposed to the harmonic seres $f, 2f, 3f, \ldots$, the frequencies of the logarithmic series grow closer together as one proceeds further up the series.
A piece using the logarithmic series may be heard in the audio example ``Cantor.wav''.

Schneider also derived a scale which divides the octave into $12$ pitches from the logarithmic series, given by the frequencies $f \ln(4), f \ln(5), \ldots$, $f \ln(16) = 2f \log(4)$.

In private correspondence, Schneider raised the following question:
Which scales contain all of their difference tones up to octave equivalence?
We will show that any such scale must consist solely of integer ratios.
Since the motivation of the logarithmic scale was to explore harmony outside of integer ratios, we offer some methods for constructing logarithmic scales which contain some, but not all of their difference frequencies.

The rest of the paper is laid out as follows.
In Section \ref{methods}, we lay out the methodology and notation necessary to prove our main theorem in Section \ref{proof}.
In Sections \ref{series} and \ref{scales} we offer constructions of frequency series and scales, respectively, which contain many of their difference frequencies.
{Section \ref{composition} describes a chordal technique for these scales which may be of interest to composers.}
{Audio examples may be found at \url{https://github.com/tsmorrill/Non-Pythagorean-Examples}.}
Finally, in Section \ref{end} we give our closing remarks.
}

\section{Methodology} \label{methods}

{
We make some simplifying assumptions on the perception of combination tones.
The first is that combination tones are only generated by the fundamental frequencies of tones, and not their higher partials.
We also assume that only the first order difference tone $f_2 - f_1$ is perceived.
These assumptions are justified based on the empirical evidence regarding perception of combination tones \cite{perception}.

With this in mind, a \emph{frequency} will always be taken to mean a positive real number measured in Hertz.
A \emph{frequency series} is an increasing sequence of frequencies $f_1 < f_2 < \ldots,$ which tends to infinity.
A \emph{scale} is a finite increasing set of frequencies $S = \{f_1, f_2, \ldots, f_n \}$.
The latter definition is compatible with the restriction that a scale consists of frequencies spanning a single octave, but we do not require this for our work.

Transposing a frequency series or scale {  is accomplished by replacing the reference $f_1$ by some other frequency $f_1'$, and replacing each $f_i$ by a $f_i'$ satisfying $f_i : f_1 = f_i' : f_1'$.} {  This} is equivalent to multiplying each of the frequencies $f_i$ by {  the constant $f_1'/f_1$.}

We will therefore \emph{normalize} frequency series and scales by dividing all of their frequencies by $f_1$.
{  This has the effect that all scales and series we present satisfy $f_1 = 1$.}
{  When these frequencies are defined by logarithms, this also has the effect of changing the base of the logarithm.}
{  For example, the scale $S = \{\ln(4), \ln(5) \}$ normalizes to $\{1, \log_4(5)\}$.}
{  Note that the normalization preserves ratios between frequencies, here
\begin{align*}
	\frac{\ln(5)}{\ln(4)} = \frac{\log_4(5)}{1}.
\end{align*}
}
{  From this point, normalized frequencies} will be represented by a closed form expression of $f_i/f_1$, a decimal approximation of $f_i/f_1$, and the interval $f_i:f_1$ measured in cents.
All decimal expansions will be rounded to the third place.
For example, here is the normalization of Schneider's octave-dividing scale.

\begin{align*}
	\begin{matrix}
		\text{Closed Form} & \text{Decimal} & \text{Cents}\\
		\hline
		1 & 1 & 0 \\
		\log_4(5)& 1.161 & 258.388 \\
		\log_4(6) & 1.292 & 444.172 \\
		\log_4(7) & 1.404 & 587.054 \\
		3/2 & 1.500 & 701.955 \\
		\log_4(9) & 1.585 & 797.338 \\
		\log_4(10) & 1.661 & 878.425 \\
		\log_4(11) & 1.730 & 948.642 \\
		\log_4(12) & 1.792 & 1010.35 \\
		\log_4(13) & 1.850 & 1065.236 \\
		\log_4(14) & 1.904 & 1114.547 \\
		\log_4(15) & 1.953 & 1159.225 \\
		2 & 2 & 1200
	\end{matrix}
\end{align*}
}
This may be heard in the audio example ``Schneider Scale.wav''.

\section{Complete Difference Tone Scales} \label{proof}
{
In addition to the terminology of the previous section}, we will call a scale $S$ a \emph{complete difference tone scale} if for every pair $x, y \in S$ with $x > y$, there exists a $z \in S$ and an integer $t$ such that
\begin{align*}
		x - y = 2^t z.
\end{align*}
{
We now prove our main result.}
\begin{theorem} \label{too-much}
	A complete difference tone scale consists solely of rational intervals.
\end{theorem}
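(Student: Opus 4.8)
The plan is to use the normalization introduced earlier: since passing to a normalized scale preserves every interval $f_i : f_j$, I may assume $f_1 = 1$, and then the claim that $S$ consists of rational intervals is exactly the assertion that each $f_i \in \QQ$. I would not need the full hypothesis on all pairs, but only the difference tones formed against the smallest note $f_1 = 1$. For each $i \geq 2$ the pair $(f_i, f_1)$ produces an index $k_i$ and an integer $t_i \in \ZZ$ with
\begin{align*}
	f_i - 1 = 2^{t_i} f_{k_i}, \qquad\text{that is,}\qquad f_i = 1 + 2^{t_i} f_{k_i}.
\end{align*}
So every $f_i$ is pinned down as a rational-affine function of a single other note $f_{k_i}$.

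Next I would organize these dependencies as a functional graph on $\{1, \dots, n\}$ via the map $i \mapsto k_i$ (and a self-loop at the index of $f_1$). Because the vertex set is finite and every vertex has exactly one out-edge, each vertex flows after finitely many steps into a unique cycle, and the remaining edges form trees hanging off those cycles. Rationality would then follow in two stages: first for the notes lying on each cycle, and then propagated outward along the trees, since $f_i = 1 + 2^{t_i} f_{k_i}$ is rational the moment $f_{k_i}$ is. The trivial cycle at $f_1$ contributes $f_1 = 1 \in \QQ$ for free.

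The heart of the argument is the cycle case. Composing $f_{i_j} = 1 + 2^{t_{i_j}} f_{i_{j+1}}$ around a cycle $i_0 \to i_1 \to \dots \to i_{L-1} \to i_0$ telescopes to a single self-referential equation
\begin{align*}
	f_{i_0} = A + B\, f_{i_0}, \qquad A = \sum_{j=0}^{L-1} 2^{\,t_{i_0} + \dots + t_{i_{j-1}}}, \qquad B = 2^{\,t_{i_0} + \dots + t_{i_{L-1}}},
\end{align*}
where $A, B \in \QQ$ and $B$ is a power of two. When $B \neq 1$ this solves to $f_{i_0} = A/(1-B) \in \QQ$, and the remaining cycle notes are rational by back-substitution. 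I expect the main obstacle to be the degenerate case $B = 1$, in which the equation carries no information and the method appears to stall. The observation that rescues it is positivity: $A$ is a sum of powers of two (the $j=0$ term being $1$), so $A > 0$, whereas $B = 1$ would force $A = 0$. Hence $B = 1$ cannot occur, every cycle note is rational, and the tree propagation completes the proof. It is worth noting that this uses only the difference tones against $f_1$, so the conclusion is quite robust.
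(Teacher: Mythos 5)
Your proof is correct, and it has the same overall architecture as the paper's: pick a single anchor note, turn the difference relations against that anchor into a functional graph on the indices, resolve each cycle by composing the resulting affine maps, and propagate rationality outward along the attached trees (the paper phrases these last two stages as periodic and preperiodic points of a map $h$). The genuine difference is the choice of anchor, and it matters. The paper anchors at the top note and iterates $f_i = f_n - 2^{t_i}f_{h(i)}$, an affine map of \emph{negative} slope; around a cycle of even length the composite can have slope exactly $1$, and the paper's ``by rearranging, $\alpha f_{i_1} = \beta f_n$'' implicitly divides by $\alpha$ without ruling out $\alpha = 0$ (for instance a $2$-cycle with both exponents zero reduces to the single relation $f_n - f_{i_1} = f_{i_2}$ and determines nothing). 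Anchoring at $f_1 = 1$ as you do gives $f_i = 1 + 2^{t_i}f_{k_i}$ with positive slope and positive intercept at every step, so the cycle equation $f_{i_0} = A + Bf_{i_0}$ has $A > 0$ and the degenerate case $B = 1$ is impossible outright --- exactly the positivity observation you single out. (One could add that $B > 1$ would force $A/(1-B) < 0$, so in fact $B < 1$ always, but rationality is all you need.) Your variant is therefore not merely a relabelling: it closes a small degenerate case that the published argument passes over.
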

{
More {  precisely}, any complete difference scale $S$ must be contained in the set $f \QQ = \{ fr | r \in \QQ \}$ {  for some frequency $f$}.
Thus, any ratio of frequencies in the scale is a rational number.
}

\begin{proof}[Proof of Theorem \ref{too-much}]
{
	Suppose that $S = \{ \nklist{f}{n} \}$ is a complete difference tone scale.
	{Then for each index $i < n$, there is an index $j$ with $1 \leq j \leq n$ and an integer $t_i$} so that $f_n - f_i = 2^{t_i} f_j$.
	We use these relations to define a function $h$ {so that} $h(i) = j$.
	{As this would be undefined for $i=n$, we} additionally define $h(n) = n$.

	We call $i$ a \emph{periodic point under $h$} if $h(h(\cdots(h(s))) = h^{s}(i) = i$ for some $s \geq 1$.
	First, we claim that periodic points under $h$ {  are the indices of} rational multiples of $f_n$.
	This is trivial for the periodic point $i = n$.
	For periodic points $i \neq n$, we may write
	\begin{align*}
		f_n - f_{i_1} &= 2^{t_{i_1}} f_{i_2}\\
		f_n - f_{i_2} &= 2^{t_{i_2}} f_{i_3}\\
		&\vdots\\
		f_n - f_{i_{s-1}} &= 2^{t_{i_s}} f_{i_1},
	\end{align*}
	which implies
	\begin{align*}
		f_n - f_{i_1} = 2^{t_{i_1}}(f_n - 2^{t_{i_2}}(f_n - 2^{t_{i_3}}(f_n - \cdots 2^{t_{i_s}} f_{i_1}))).
	\end{align*}
	By rearranging, we have $\alpha f_{i_1} = \beta f_n$, where $\alpha$ and $\beta$ are rational numbers.

	We call $i$ a \emph{preperiodic point under $h$} if $h^{s}(i)$ is a periodic point under $h$ for some $s \geq 1$.
	We claim that preperiodic points under $h$ {  are also the indices of} rational multiples of $f_n$.
	We have
	\begin{align*}
		f_n - f_{i_1} &= 2^{t_{i_1}} f_{i_2}\\
		f_n - f_{i_2} &= 2^{t_{i_2}} f_{i_3}\\
		&\vdots\\
		f_n - f_{i_{s_1}} &= 2^{t_{i_s}} f_{i_s} = 2^{t_{i_s}} \alpha f_{n},
	\end{align*}
	where $\alpha$ is some rational number.
	This implies that
	\begin{align*}
		f_n - f_{i_1} = 2^{t_{i_1}}(f_n - 2^{t_{i_2}}(f_n - 2^{t_{i_3}}(f_n - \cdots 2^{t_{i_s}} \alpha f_{n}))).
	\end{align*}
	By rearranging, we find that $f_{i_1}$ is a rational multiple of $f_n$.

	Finally, note that each $i$ is either periodic or preperiodic under $h$.
	Thus, every $f_i \in S$ is a rational multiple of $f_n$, which is to say, $S$ consists solely of rational intervals.
	}
\end{proof}

{  We pause to reflect on the consequences of Theorem \ref{too-much}.}
{  Consider its converse:}
\begin{corollary} \label{converse}
	{  A scale which contains an irrational interval is not a complete difference tone scale.}
\end{corollary}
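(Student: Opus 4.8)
The plan is to obtain this corollary directly from Theorem \ref{too-much} by contraposition, since the two statements are logically equivalent: the theorem says ``complete difference tone scale'' implies ``all intervals rational,'' and the corollary is exactly the contrapositive, ``some interval irrational'' implies ``not a complete difference tone scale.'' The entire content of the result therefore already lives in Theorem \ref{too-much}, and nothing new needs to be established.

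Concretely, I would argue by contradiction. Suppose $S$ is a scale containing an irrational interval, so that there exist $f_i, f_j \in S$ whose ratio $f_i/f_j$ is irrational, and suppose toward a contradiction that $S$ is nonetheless a complete difference tone scale. Then Theorem \ref{too-much} applies and places $S$ inside $f\QQ$ for some frequency $f$; equivalently, every frequency of $S$ is a rational multiple of $f_n$. Writing $f_i/f_j = (f_i/f_n)/(f_j/f_n)$ exhibits the interval as a quotient of two rational numbers, hence as a rational number, contradicting the irrationality of $f_i/f_j$. Thus no such $S$ can be a complete difference tone scale.

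There is no real obstacle here, as the argument is a one-line application of the contrapositive. The only point meriting care is the interpretation of the phrase ``irrational interval'': I would state explicitly that an interval $f_i : f_j$ is irrational precisely when the ratio $f_i/f_j$ is irrational, so that the hypothesis of the corollary is the exact negation of the rationality-of-ratios conclusion furnished by Theorem \ref{too-much}.
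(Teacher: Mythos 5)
Your proposal is correct and matches the paper's treatment: the paper states Corollary \ref{converse} without proof as an immediate logical consequence of Theorem \ref{too-much}, which is exactly the contrapositive argument you give. (Your care in noting that the statement is the contrapositive, not the converse as the paper's surrounding text calls it, is well placed.)
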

{  Thus, while it is possible to construct a complete difference tone scale from the logarithmic series, the normalization of such a scale would reduce to rational freqeuncies.}
{  Consider the scale $S = \{\ln(16), \ln(32), \ln(64)\}$.}
{  The first order differences of this scale are
\begin{align*}
	\ln(64) - \ln(32) &= \ln(2) = \frac{1}{4}\ln(16)\\
	\ln(64) - \ln(16) &= \ln(4) = \frac{1}{2}\ln(16) \\
	\ln(32) - \ln(16) &= \ln(2) = \frac{1}{4}\ln(16).\\
\end{align*}
We observe that $S$ is a complete difference tone scale.
However, $S$ contains no irrational intervals, which may be seen from its normalization, $S' = \{1, 5/4, 3/2 \}$.
}
{  In keeping with Schneider's stated goal of exploring harmony outside of integer ratios, we will now construct logarithmic series and scales which feature irrational intervals and contain some of their difference frequencies.}
{  However, as a consequence of Corollary \ref{converse}, these scales cannot contain all of their difference frequencies.}

\section{Construction of Logarithmic Frequency Series} \label{series}
{
{  Consider the} difference frequencies generated by a frequency series.
For example, it's easy to check that the harmonic series $f, 2f, 3f, \ldots,$ contains all of its difference frequencies:
Any two frequencies $mf < nf$ have a difference frequency of $(n-m)f$, which also occurs in this series.
However, this is not true of Schneider's logarithmic series $f \log(3), f\log(4), f\log(5)\, \ldots,$ as the first two frequencies have a difference of $f \log(4/3)$.
In general, the difference frequency $f\log(m/n)$ occurs in the logarithmic series if and only if $n/m$ reduces to an integer.
How then can we restrict the logarithmic series such that $m/n$ will always reduce to an integer?

One option is to take logarithms of the factorial function \cite{Factorial}, { which gives what we call the \emph{logarithmic factorial series}.}
\begin{align*}
	\begin{matrix}
		\text{Closed Form} & \text{Decimal} & \text{Cents} \\
		\hline
		1 & 1 & 0 \\
		\log_2(6) & 2.585 & 1644.172 \\
		\log_2(24) & 4.585 & 2636.292 \\
		\log_2(120) & 6.907 & 3345.644 \\
		\log_2(720) & 9.492 & 3896.028 \\
		\log_2(5040) & 12.299 & 4344.592 \\
		\log_2(40320) & 15.299 & 4722.462 \\
		\vdots & \vdots & \vdots
	\end{matrix}
\end{align*}
The differences of this series are {  of the form $f\log( n!/m! )  = f\log((m+1)(m+2) \cdots n)$}, which falls in the logarithmic series.

Another { method } is to take { logarithms of the \emph{primorial numbers}, where each entry $n_k$ is the product of the first $k$ {  prime numbers $p_1, p_2, \ldots p_k$.
Suppose that $x$ is a real number and $p_k$ is the largest prime no greater than $x$.
Then we have
$$
\ln(n_k) = \ln \big( p_1  p_2   \cdots  p_{k} \big) = \sum_{p_i \leq x} \ln(p_i).
$$
which is the definition of the Chebyshev theta function, $\vartheta(x)$.
Hence, we call the corresponding frequency series} the \emph{Chebyshev series}.}
\begin{align*}
	\begin{matrix}
		\text{Closed Form} & \text{Decimal} & \text{Cents} \\
		\hline
		1 & 1 & 0 \\
		\log_2(6) & 2.585 & 1644.172 \\
		\log_2(30) & 4.907 & 2753.771 \\
		\log_2(210) & 7.714 & 3537.03 \\
		\log_2(2310) & 11.174 & 4178.439 \\
		\log_2(30030) & 14.874 & 4673.679 \\
		\log_2(510510) & 18.962 & 5094.009 \\
		\log_2(9699690) & 23.210 & 5443.973 \\
		\vdots & \vdots & \vdots
	\end{matrix}
\end{align*}
{
Similarly to the logarithmic factorial series,} the difference frequencies of this series are of the form $f\ln( p_i p_{i+1} \cdots p_j)$, which falls in the logarithmic series.
However, the difference frequencies of both these subseries fall in the full logarithmic series, not in the subseries themselves.

We could instead choose a sequence of integers $d_1, d_2,$ $\ldots,$ $d_k$ {  and specify that the sequence of difference frequencies is periodic:
\begin{align*}
	f_2 - f_1 &= \ln d_1\\
	f_3 - f_2 &= \ln d_2\\
	&\vdots\\
	f_{k-1} - f_k &= \ln d_k\\
	f_k - f_{k+1} &= \ln d_1\\
	f_{k+2} - f_{k+1} &= \ln d_2\\
	&\vdots
\end{align*}
This correseponds to the frequency series}
\begin{align*}
	\ln(d_1),\
	\ln(d_1d_2) \ldots,
	\ln(d_1 d_2 \cdots d_k),
	 \ln(d_1^2 d_2 \cdots d_k),
	  \ln(d_1^2 d_2^2 \cdots d_k),
	   \ldots.
\end{align*}
{  This series} contains all of its difference frequencies, with the exception of $\ln(d_2)$, $\ln(d_3)$, \ldots, $\ln(d_k)$.
Appending {  the missing frequencies $\ln(d_i)$} may introduce additional difference frequencies depending on their divisibility properties.
For example, with $k=2$ and $d_1 = 3$, $d_2 =5$, we have
\begin{align*}
	\begin{matrix}
		\text{Closed Form} & \text{Decimal} & \text{Cents} \\
		\hline
		1 & 1 & 0 \\
		\log_3(5) & 1.465 & 661.050 \\
		\log_3(15) & 2.465 & 1561.887 \\
		\log_3(45) & 3.465 & 2151.413 \\
		\log_3(225) & 4.930 & 2761.887 \\
		\log_3(675) & 5.930 & 3081.623 \\
		\log_3(3375) & 7.395 & 3463.842 \\
		\log_3(16875) & 8.860 & 3776.747 \\
		\vdots & \vdots & \vdots
	\end{matrix}
\end{align*}

\section{Construction of Logarithmic Scales} \label{scales}
Here we seek to { produce logarithmic scales which contain many, but not all, of their difference tones.
Occasionally, these methods will produce a rational interval.
This was also true for Schneider's original scale: note the perfect fifth between $f\log(4)$ and $f \log(8)$.
Thus, we do not consider the occasional rational interval a flaw of our method, so long as we also produce irrational intervals.
Throughout this section we take $f$ to be an arbitrary reference frequency.

We start with two families of scales which divide the octave.
For the first family,} choose integers $n$ and $m$ so that {  for some positive integer $k$,} $n^{2^k}$ {  is approximately equal to} $m$.
{Then the \emph{root approximation scale} is given by}
\begin{align*}
	\{
	f\log(m),
	f\log(nm), \
	f\log(n^2m),
	\ldots,
	f\log(n^{2^{k-1}}m),
	f\log(n^{2^{2k}}),
	f\log(m^2)
	\}.
\end{align*}
In this scale, many of the difference frequencies take the form $2^sf \log(n)$, which is an octave equivalent of $f\log(n^{2^{k}})$.
{
For example, if we {  approximate $17$ by $2^4$}, then corresponding scale is given by}
\begin{align*}
	\begin{matrix}
		\text{Closed Form} & \text{Decimal} & \text{Cents} \\
		\hline
		1 & 1 & 0 \\
		\log_{17}(34) & 1.245 & 378.889 \\
		\log_{17}(68) & 1.489 & 689.563 \\
		\log_{17}(136) & 1.734 & 952.876 \\
		\log_{17}(256) & 1.957 & 1162.553 \\
		2 & 2 & 1200 \\
	\end{matrix}
\end{align*}
This may be heard in the audio example ``Root Approximation Scale.wav''.
{  Note that if $n^{2^k} = m$, then the scale reduces to}
\begin{align*}
	\{
	f\log(n^{2^k}),
	f\log(n^{2^k + 1}), \
	f\log(n^{2^k + 2}),
	\ldots,
	f\log(n^{2^{2k-1}}),
	f\log(n^{2^{2k}})
	\},
\end{align*}
{  As in Section \ref{proof}, we see that this scale consists exclusively of rational intervals, which runs counter to our goal.}

{
For the second family, we choose a composite integer $N$ with the prime factorization $p_1^{a_1} p_2^{a_2} \cdots p_t^{a_t}$, whose positive {  divisors} we list as
\begin{align*}
	1 = \nkshrink{n}{k} = N.
\end{align*}
The \emph{factorization scale} is given by}
\begin{align*}
	\{ f\ln(1\times N), f\ln(n_2 N), \ldots, f\ln(n_{k-1} N), f\ln(N^2) \}
	\cup
	\{f\ln(p_i^{2^{b_i}}) | 1 \leq i \leq t\},
\end{align*}
where $b_i = \lceil \log_2(\log_{p_i}(N)) \rceil$.
For example, letting $N = 108 = 2^2 3^3$ produces the scale
\begin{align*}
	\begin{matrix}
		\text{Closed Form} & \text{Decimal} & \text{Cents} \\
		\hline
		1 & 1 & 0 \\
		\log_{108}(216) & 1.148 & 239.009 \\
		\log_{108}(256) & 1.184 & 292.882 \\
		\log_{108}(324) & 1.235 & 364.908 \\
		\log_{108}(432) & 1.296 & 448.988 \\
		\log_{108}(648) & 1.383 & 560.961 \\
		\log_{108}(972) & 1.469 & 666.130 \\
		\log_{108}(1296) & 1.531 & 737.054 \\
		\log_{108}(1944) & 1.617 & 832.326 \\
		\log_{108}(2916) & 1.704 & 922.627 \\
		\log_{108}(3888) & 1.765 & 983.956 \\
		\log_{108}(5832) & 1.852 & 1066.863 \\
		\log_{108}(6561) & 1.877 & 1090.220 \\
		2 & 2 & 1200
	\end{matrix}
\end{align*}
This may be heard in the audio example ``Factorization Scale.wav''.
{
Here, the difference frequencies take the form $f \ln(p_1^{b_1} p_2^{b_2} \cdots p_t^{b_t})$, where $-a_i \leq b_i \leq a_i$.
{  If $m = p_1^{b_1} p_2^{b_2} \cdots p_t^{b_t}$ reduces to an integer and satisfies $m = a_j^{2^t}$ for some $j$ and $t$, then the difference tone $\ln(m)$ occurs in the scale, up to octave equivalence.}
{  However,} if $m$ {  does not reduce to an integer}, then $\ln(m)$ {  will not} appear in the logarithmic series, {  let alone the factorization scale}.

This raises the question, can we develop scales whose difference frequencies {  are the logarithms of rational numbers, rather than integers?}
We close with a {  such a family, whose scales} do not contain redundant intervals, and do not span one octave.
{  Non-octave scales are not unheard of in musical practice.}
{  For example, none of Carlos's $\alpha$, $\beta$, and $\gamma$ scales contain the octave \cite{Carlos}.}

Choose bases $\nklist{b}{k}$ and heights $h_{b_1}, h_{b_2}, \ldots, h_{b_k}$.
Let $T$ be the set of rational numbers of the form { $t = b_1^{a_1} b_1^{a_1} \cdots b_k^{a_k}$, where we require $|a_i|\leq h_i$, $\gcd(\nklist{a}{k}) = 1$, and $t > 1$.
The \emph{projective scale} is then given by $S = \{f \log(t) | t \in T\}$.
For example, choosing bases $2, 3$ and heights $h_2 = 2$, $h_3 = 1$ produces the set $T = \{ 4/3, 3/2, 2, 3, 6, 12\}$, and the projective scale is given by}
\begin{align*}
	\begin{matrix}
		\text{Closed Form} & \text{Decimal} & \text{Cents} \\
		\hline
		1 & 1 & 0 \\
		\log_{4/3}(3/2) & 1.409 & 594.123 \\
		\log_{4/3}(2) & 2.409 & 1522.424 \\
		\log_{4/3}(3) & 3.819 & 2319.762 \\
		\log_{4/3}(6) & 6.228 & 3166.596 \\
		\log_{4/3}(12) & 8.638 & 3732.773
	\end{matrix}
\end{align*}
This may be heard in the audio example ``Projective Scale.wav''.
In this example, the difference frequencies take the form $\log_{4/3}(2^i 3^j)$, which are octave equivalents of frequencies in the scale with the exception of
\begin{align*}
	\log_{4/3}(3/2) - \log_{4/3}(4/3) &= \log_{4/3}(9/8)\\
	\log_{4/3}(6) - \log_{4/3}(4/3) &= \log_{4/3}(9/2).
\end{align*}

\section{Logarithmic Composition} \label{composition}

{  It is natural to ask what benefit these scales offer to a composer.}
{  Certainly the main appeal is a systematic method to incorporate difference frequencies while excluding rational intervals from composition.
{  Controlling when rational and irrational intervals occur may be achieved using the factorization of integers into primes.}
{ 
Let $A$ be a positive integer with the unique prime factorization
$$
	A = \prod_{i=1}^k p_i^{a_i}.
$$
}
Because of the property
$$
	\ln(A) = \sum_{i=1}^k a_i \log(p_i),
$$
each positive integer corresponds to a unique pitch set in the logarithmic series,
\begin{align*}
	C_A = \{a_1 \log(p_1), a_2 \log (p_2), \ldots, a_k \log(p_k) \}.
\end{align*}
}
{  So long as $A$ has at least three distinct prime divisors, playing the frequencies of $C_A$ simultaneously will produce a chord of the logarithmic series.}
{  Under this assumption, we call $C_A$ the \emph{factored chord} corresponding to $A$.
Two factored chords $C_{A_1}$ and $C_{A_2}$ relate to each other harmonically depending on which primes occur in both the factorizations of $A_1$ and $A_2$.
A short piece using factored chords may be heard in the audio example ``Factored Chords.wav''.
For example, the factored chords $C_{2016}$ and $C_{4752}$ are given by}
\begin{align*}
	C_{2016} &= \{5 \ln(2), 2 \ln(3),  \ln(7) \}\\
	C_{4752} &= \{4 \ln(2), 3 \ln(3), \ln(11) \}.
\end{align*}

{  Transitioning between $C_{2016}$ and $C_{4752}$ involves two rational intervals:}
\begin{align*}
	5\ln(2) &\mapsto 4\ln(2), \qquad \text{down a major third} \\
	2\ln(3) &\mapsto 3 \ln(3), \qquad \text{up a major fifth} \\
\end{align*}
and one irrational interval:
\begin{align*}
	\ln(7) &\mapsto \ln(11).
\end{align*}
{ 
This reflects the fact that $2016$ and $4752$ are both divisible by $2$ and by $3$.
Recordings demonstrating this technique, as well as recordings of each of the constructions in Section \ref{scales} are available online at \url{https://github.com/tsmorrill/Non-Pythagorean-Examples}.
}

\section{Conclusion} \label{end}
{
Our main result is that complete difference tone scales consist solely of rational intervals.
Hence, a scale {  constructed} using logarithms {  with the intention} to {  feature} irrational intervals cannot be a complete difference tone scale.
With this in mind, we have given three refinements of Schneider's logarithmic series, and several families of scales, parameterized by one or more positive integers, which aim to contain some of their difference frequencies, but not all.

Owing to our methodology, these scales do not account for combination tones generated between higher partials of notes, or combination tones besides the first order difference $f_2 - f_1$.
In his experimental pieces, Schneider primarily uses sine waves, which do not have any partials above the fundamental frequency \cite{Schneider}.
We think it would be interesting to use non-harmonic tones with these logarithmic scales, whose partials fall in the logarithmic series, or one of the refinements given in Section \ref{series}.
}



\section*{Funding}

This work was supported by Australian Research Council Discovery Project DP160100932.

\bibliographystyle{alpha}
\bibliography{MySubmissionBibTexDatabase}

\end{document}